\let\bbordermatrix\bordermatrix
\patchcmd{\bbordermatrix}{8.75}{4.75}{}{}
\patchcmd{\bbordermatrix}{\left(}{\left[}{}{}
\patchcmd{\bbordermatrix}{\right)}{\right]}{}{}
\newcommand{\R}{{\mathbb R}}
\newcommand{\C}{{\mathbb C}}
\newcommand{\Z}{{\mathbb Z}}
\newcommand{\N}{{\mathbb N}}
\newcommand{\D}{{ \mathbb{T}}}
\newcommand{\M}{\mathcal{M}}
\newcommand{\Oo}{{\cal O}}
\newcommand{\mbf}{\mathbf}
\newcommand{\beq}{\begin{equation}}
\newcommand{\eeq}{\end{equation}}
\newcommand{\beqa}{\begin{eqnarray}}
\newcommand{\eeqa}{\end{eqnarray}}
\newcommand{\bmat}{\begin{bmatrix}}
\newcommand{\emat}{\end{bmatrix}}
\newcommand{\tr}{\mbox{Tr}}
\newcommand{\dsp}{\displaystyle}
\newcommand{\vmv}{{\cal VMV}}
\newcommand{\KMS}{Kac-Murdock-Szeg\H{o} }
\newtheorem{theo}{{\bf Theorem}}[section]
\newtheorem*{theo*}{{\bf Theorem}}
\newtheorem{cor}[theo]{{\bf Corollary}}
\newtheorem{lem}[theo]{{\bf Lemma}}
\theoremstyle{definition}
\newtheorem{defn}{{\bf Definition}}[section]
\numberwithin{equation}{section}
\author{A. Bourget and T. McMillen \thanks{\textbf{Mailing address}: Department of Mathematics, California State University (Fullerton),
McCarthy Hall 154, Fullerton CA 92834 (US). \textbf{Email address}: abourget@fullerton.edu, tmcmillen@fullerton.edu }}
\title{A First Szeg\H{o}'s Limit Theorem for a class of non-Toeplitz matrices}
\begin{document}

\maketitle

\bibliographystyle{plain}

\begin{abstract}
We compute the limiting statistical distribution of the eigenvalues of sequences of matrices whose entries satisfy what we call a vanishing mean variation condition and are $\mu$-distributed for some probability measure.
As an application of our results, we extend the well-known class of \KMS generalized Toeplitz matrices to sequences of matrices whose diagonal entries are modeled by Riemann integrable functions.
\end{abstract}

\noindent \textbf{Keywords:} First Szeg\H{o}'s Limit Theorem, Kac-Murdock-Szeg\H{o} matrices, vanishing mean variation, equidistributed sequences.\\

\noindent \textbf{MSC}: 35P20, 47B35, 41A60, 15B05

\section{Introduction}

Kac, Murdock and Szeg\H{o} \cite{kamusz53} introduced  generalized Toeplitz matrices in 1953.  These are defined  in the following way.  Let $a(s,t)$ be a complex valued function (called the symbol of the matrix) such that the Fourier coefficients
\beq
\hat{a}_k(s) = \frac{1}{2\pi} \int_{-\pi}^{\pi} a(s,t) e^{-ikt} dt
\eeq
are defined on $[0,1]$.  For each integer $n$, define the $ n \times n$ matrix
\beq
T_n(a) = \left[\hat{a}_{j-i}\left(\frac{i+j}{2n+2}\right)\right]_{i,j=0}^{n-1}
\eeq
The following generalizes  Szeg\H{o}'s First Limit Theorem.
\begin{theo*}[Kac, Murdock and Szeg\H{o} (1953)]
\label{KMStheorem}
Suppose the following conditions hold.
\begin{enumerate}
\item[(i)]
The symbol $a(s,t)$ is real valued.
\item[(ii)]
The functions $\hat{a}_k(s)$ are continuous on $[0, 1]$.
\item[(iii)]
There exists a constant ${\cal N}$ such that
\begin{equation}
\label{cond szego 0}
{\cal N}:= \sum_{k=-\infty}^{\infty} \| \hat{a}_k \|_\infty < \infty.
\end{equation}
\end{enumerate}
Denote the eigenvalues of $T_n(a)$ by $\lambda_{k}(T_n(a))$.  Then $\lambda_{k}(T_n(a)) \in [-{\cal N}, {\cal N}]$, and
one has the following:
\begin{equation}
\lim_{n\rightarrow\infty} \frac{1}{n}\sum_{k=1}^n \varphi\left(\lambda_{k}(T_n(a)) \right)  = \frac{1}{2\pi}  \int_{-\pi}^{\pi} \int_0^1 \varphi\left(a(s,t)\right) ds\,dt
\label{2.1}
\end{equation}
for any $\varphi \in C([-{\cal N}, {\cal N}])$.
\end{theo*}
The above theorem says, roughly, that as $n\rightarrow\infty$, the eigenvalues of $T_n(a)$ distribute like the values of $a(s,t)$ sampled at regularly spaced points in the rectangle $[0, 1]\times [-\pi, \pi]$.  Thus, one obtains the limiting statistical distribution (LSD) of the eigenvalues of $T_n(a)$. In the case when the symbol does not depend on $s$, $a(s,t)=a(t)$,  \eqref{2.1} reduces to the First Szeg\H{o}'s Limit Theorem for Toeplitz matrices \cite{grsz58}.


The \KMS matrices are generalizations of Toeplitz matrices, in one sense, since as long as the functions $\hat{a}_k$ are continuous, the diagonals of $T_n(a)$ satisfy a small deviation condition.  A key point we want to make with this paper is that it is the small deviation condition that allows the calculation to go through.  Taking the diagonals from continuous functions, as Kac, Murdock, and Szeg\H{o} do, is only one way to ensure this condition.  (See \S\ref{illustrativeexample} for  examples of  matrix sequences that are not of the \KMS  type.)

\smallskip

The main purpose of this paper is to  generalize \eqref{2.1} to a larger class of operators. More precisely, we compute the LSD, defined as the weak-limit of the measures
$ \frac{1}{n} \sum_{k=1}^n \delta_ {\lambda_k(A_n)}$,
for the eigenvalues $\lambda_k(A_n)$ of sequences of matrices $\{A_n\}$ for which the entries along each of their diagonals satisfy what we call a vanishing mean variation condition (see Definition~\ref{ECN}) and are asymptotically distributed for some probability measure $\mu$ on some compact subspace $X$ in  $\C^{k}$ or $l^1(\C)$ (see Definition~\ref{mu distributed}).  In this case we have
\begin{equation}
\lim_{n\rightarrow\infty} \frac{1}{n} \sum_{k=1}^n \varphi(\lambda_k(A_n))  = \frac{1}{2\pi}  \int_{-\pi}^{\pi} \int_{X} \varphi\left(F(\pmb{z},t)\right) \,d\mu(\pmb{z})\, dt
\label{mainresult}
\end{equation}
where $F(\pmb{z},t)$ is the Fourier series defined in \eqref{F}. For Hermitian matrices $A_n$, the formula \eqref{mainresult} holds for compactly supported continuous functions $\varphi$ on $\R$, and hence gives the LSD of the sequence $\{A_n\}$.  For arbitrary matrices, the formula holds for analytic $\varphi$ in $\C$.


In addition to the small deviation condition, another important aspect of \eqref{mainresult} is the presence of the measure $\mu$. Because of its greater generality, we use \eqref{mainresult}  to rederive and extend a number of results.  For instance,  in Theorem \ref{cor main 2} below, we show that condition (ii) in the Kac-Murdock-Szeg\H{o} Theorem above may be replaced by the condition that  the $\hat{a}_k$ are Riemann integrable.


Our approach is based on the standard moments method. We begin by setting up some terminology and definitions in the next section. In the third section,  we start by computing the moments of sequences of matrices of fixed band size and then extend our results to the non-band case.  In \S\ref{illustrativeexample} we give some examples to illustrate the novelty of our approach. We conclude with a discussion of open questions.

\section{Notation and definitions }

All of the results below are greatly simplified if we number the entries of matrices along their diagonals rather than the usual row-column positions. For this reason,
we write the $n \times n$ matrix $A_n$ in the following form:
 \begin{equation}
A_n =[a_{k;j}]= \bmat a_{0;0} & a_{1;0} &  &  & a_{k;0} & & a_{n-1;0}  \\
 a_{-1;0}  & a_{0;1} & \ddots & & & \ddots &\\
 & a_{-1;1} & \ddots &   &  &    & a_{k;n-k}\\
& & \ddots &  & \ddots\\
a_{-k;0} &&&  & \ddots & a_{1;n-2} \\
&\ddots &&& \ddots &a_{0;n-1}  & a_{1;n-2} \\
a_{-n+1;0} &   & a_{-k;n-k} & & & a_{-1;n-2} & a_{0;n-1}
\emat.
\label{diagnotation}
\end{equation}
In other words, $a_{k;j}=a_{k;j}(n)$ denotes the $j$th term on the $k$th diagonal, with $k=0$ corresponding to the main diagonal.  A Toeplitz matrix has $a_{k:j} = a_k$.  The entries may depend on the size $n$ of the matrix, but we will usually suppress the dependence of $a_{k;j} = a_{k;j}(n)$ on $n$ where this is clear.
A matrix with band size $k_0$ is zero outside the band $k_0$ from the main diagonal, i.e. $a_{k;j} = 0$ for $|k|>k_0$.

%

\smallskip

We denote the eigenvalues of $A_n$ by $\lambda_1(A_n),...,\lambda_n(A_n)$ and its singular values by $\sigma_1(A_n) \geq \dots  \geq \sigma_n(A_n)$. The trace norm of $A_n$ is defined by
\begin{equation} \label{trace norm}
 \|A\|_{tr}=\|A\|_1 = \sum_{k=1}^n \sigma_k(A).
 \end{equation}
On a few occasions, we also need to consider the spectral norm defined by
\begin{equation}
\|A\|_\infty = \sigma_1(A).
\end{equation}

\smallskip

We denote by $l^1=l^1(\C)$, the space of all complex sequences $\{z_k\}_{k \in \Z}$ for which $\sum_{k \in \Z} |z_k|$ is finite. We also denote throughout the paper the Fourier series
with coefficients $\pmb{z}=\{z_k\}_{k \in \Z} \in l^1$ by
\begin{equation} \label{F}
F(\pmb{z},t) : = \sum_{k \in \Z} z_k e^{ikt} \qquad (t \in [-\pi,\pi]).
\end{equation}
Note that $F(\pmb{z},\cdot) \in \mathcal{A}(\D)$, the Wiener algebra of summable Fourier series. When $z_k=0$ for $|k|>k_0$, we  write $\pmb{z}=(z_{-k_0},...,z_{k_0})$ and let
\begin{equation} \label{P}
P(\pmb{z},t) : = \sum_{|k| < k_0} z_k e^{ikt} \qquad (t \in [-\pi,\pi])
\end{equation}
be the trigonometric polynomial with coefficients $\pmb{z} \in \C^{2k_0+1}$.  

\subsection{Vanishing mean variation sequences}

In his seminal work on the spectral  theory of Jacobi matrices, Simon \cite{si09} introduced the Ces\`aro-Nevai class as the set of Jacobi matrices $J(\pmb{a},\pmb{b})$ whose sequences $\pmb{a}=\{a_k\}$ and $\pmb{b}=\{b_k\}$ satisfy the condition
\begin{equation} \label{SCN}
    \sum_{k=1}^n |a_{k}| + |b_k-1| =o(n).
\end{equation}
His definition was motivated by the study of Jacobi matrices that are perturbations of the free discrete Schr\"odinger operator $J(\pmb{0},\pmb{1})$.  In particular, if \eqref{SCN} holds, then $J(\pmb{a},\pmb{b})$ has the same LSD as $J(\pmb{0},\pmb{1})$. Following \eqref{SCN}, the first author considered in \cite{bo12} sequences of  Jacobi matrices $\{J_n\}$ that satisfy
\begin{equation} \label{SCN2}
    \sum_{k=1}^n |a_{k+1}(n)-a_k(n)| + |b_{k+1}(n)-b_k(n)| =o(n).
\end{equation}
In the definition below, we extend \eqref{SCN2} to  sequences of general matrices.

\begin{defn}  \label{ECN}
We say that a matrix sequence $\{A_n\}$ is of vanishing mean variation if the entries along the diagonals of $A_n$ satisfy the asymptotic condition
\begin{equation} \label{vmv cond}
\sum_{j=0}^{n-k-1} \left| a_{k;j+1} (n) -a_{k;j} (n) \right| = o(n)
\end{equation}
for each $k$.
We denote by $\mathcal{VMV}$ the set of all such matrix sequences.
\end{defn}

\smallskip

In particular, the vanishing mean variation condition allows us to shift the indices of products of entries of $A_n$. This observation will play a crucial role in the proof of Theorem \ref{main trace} below. Here are some basic examples of sequences that belong to $\mathcal{VMV}$ and that we consider later on:

\begin{itemize}
\item[ (i)] If $T(a)$ is a Toeplitz operator with symbol $a \in \mathcal{A}(\D)$, then the sequence $\{T_n(a)\}$ is obviously in $\mathcal{VMV}$.

\smallskip

\item[(ii)] More generally, any sequence $\{A_n\}$ with diagonals entries given by density one convergent sequences, i.e. for every $\varepsilon>0$,
$$ \#\{j \leq n: |a_{k;j}(n) - a_k|>\varepsilon \} =o(n)$$
is easily seen to belong to $\mathcal{VMV}$.

\smallskip

\item[(iii)] Results for the \KMS matrices can be extended to sequences $\{A_n\}$ whose diagonals are modeled by Riemann integrable functions.  Indeed, let $P_n=\{t_{0;n},t_{1;n},...,t_{n;n}\}$ be a sequence of partitions of $[0,1]$ with $\text{mesh}(P_n)=o(1)$ and let $a_{k;j}(n)=\hat{a}_k(t_{j;n})$ for some Riemann integrable functions $\hat{a}_k$ on $[0,1]$.  It is straightforward to prove that $\{A_n\} \in \mathcal{VMV}$.
\end{itemize}

\smallskip

\subsection{$\mu-$distributed sequences}

Our second definition is based on the standard notion of asymptotically distributed sequences with respect to a probability measure $\mu$ on compact metric spaces $X$.  A sequence $\{x_k\}$ in $X$ is said to be asymptotically distributed wrt $\mu$ if the measures $ \frac{1}{n} \sum_1^n \delta_{x_k}$ converge weakly to $\mu$. We extend this notion to sequences of matrices in the following manner.

\smallskip

\begin{defn}
\label{mu distributed}
 Let $X$ be a compact subspace of $l^1$. The sequence of matrices $\{A_n\}$ is said to be $\mu$-distributed for some Borel probability measure $\mu$ on $X$ if
$\frac{1}{n} \sum_{j=0}^{n} \delta_{\pmb{a}_j(n)}$
 converge weakly to $\mu$ with
 \begin{equation} \label{ajn}
 \pmb{a}_j(n)=\{...,0,a_{-j;j},a_{-j+1;j}...,a_{n-j;j},0,...\}.
 \end{equation}
 For $\{A_n\}$ of fixed band size $k_0$, we write $\pmb{a}_j(n)=(a_{-k_0;j},...,a_{k_0;j})$ and take $X$ to a be compact subspace in $\C^{2k_0+1}$.
\end{defn}

\smallskip

All of the examples given in the previous section are $\mu$-distributed. Indeed, we have:

\begin{itemize}
 \item[(i)] The sequence $\{T_n(a)\}$ obtained from a Toeplitz matrix $T(a)$ with $a \in \mathcal{A}(\D)$ is $\delta_{\pmb{a}}$-distributed on $l^1$ with $\pmb{a}=\{a_k\} \in l^1$.

\smallskip

\item[(ii)] If $\{A_n\}$ has density one convergent diagonal sequences as in (ii) above
to $\pmb{a}=\{a_k\} \in l^1$, then $\{A_n\}$ is  $\delta_{\pmb{a}}-$distributed.

\smallskip

\item[(iii)] Let $\{A_n\}$ be as in (iii) above and let $\pmb{\alpha}:[0,1] \to l^1$ be the map defined by
\begin{equation} \label{alpha map}
\pmb{\alpha}(t) = \{\hat{a}_k(t)\}.
\end{equation}
If for all $t$, $\pmb{\alpha}(t) \in X$ for some compact $X \subset l^1$, then $\{A_n\}$ is $m_{\pmb{\alpha}}$-distributed with $m_{\pmb{\alpha}}$ the push-forward of the Lebesgue measure $m$ under the map $\pmb{\alpha}$ .

\end{itemize}

\section{Main results}

We now come to the main results of the paper, i.e. Szeg\H{o}'s Limit Theorems for $\mathcal{VMV}$ sequences of matrices that are $\mu$-distributed. We begin by computing the moments for sequences of matrices $\{A_n\} \in \mathcal{VMV}$ of fixed band-size, then we extend our result to sequences of arbitrary band sizes.

\smallskip

\subsection{Sequences of band matrices}

We now consider sequences $\{A_n\}$ of fixed band-size $k_0$, i.e. $a_{k;j}(n)=0$ for all $|k| > k_0$. Throughout this section, we make the assumption that the entries of $\{A_n\}$ are uniformly bounded, i.e.
\begin{equation}
  \sup_n ( \max_{j,k} |a_{k;j}(n)|) < \infty.
\end{equation}
The proof of the following theorem is a modification of that of \KMS \cite{kamusz53,grsz58}, modified for the $\mu-$distributed case, and couched in terms of the notation of \eqref{diagnotation}.

\begin{theo} \label{main trace}
Let $\{A_n\} \in \mathcal{VMV}$ of fixed band size $k_0$ be $\mu$-distributed on some compact $X \subset \C^{2k_0+1}$.  Then for any $r,s \in \N$, we have
\begin{equation} \label{Trace 2}
    \lim_{n \to \infty} \frac{1}{n} \tr\left[A_n^r(A_n^*)^s) \right] = \frac{1}{2\pi}  \int_{-\pi}^{\pi} \int_{X}  P^r( \pmb{z} , t) \overline{P^s(\pmb{z},t)} \, d\mu(\mbf{z})  \, dt.
\end{equation}
\end{theo}

\smallskip

The proof of this theorem relies on the fact that we can shift indices modulo an $o(n)$ term.  This is the content of the following lemma.  In what follows we define $a_{k;j} = a_{k;j}(n)$ to be zero if  $j<0$, $j>n-k$, or $|k|>n$.
\begin{lem}
\label{shiftlemma}
Let $\{A_n\} \in \vmv$.  Then for any integers $\nu_1, \nu_2, \dots \nu_p$ and $h_1, h_2, \dots, h_p$,
\[ \sum_{j=0}^n a_{h_1; \nu_1 + j}a_{h_2;\nu_2+j}\cdots a_{h_p;\nu_p+j}
=  \sum_{j=0}^n a_{h_1;  j}a_{h_2; j}\cdots a_{h_p;j}
+o(n)
\]
\end{lem}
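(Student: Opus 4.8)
The plan is to reduce the general claim to the one-step shift $\nu_i \mapsto \nu_i - 1$ (or $+1$) applied to a single index, and then telescope. The key observation is that shifting \emph{all} the indices simultaneously by a common amount costs nothing but boundary terms: if $m = \min_i \nu_i$, then
\[
\sum_{j=0}^{n} a_{h_1;\nu_1+j}\cdots a_{h_p;\nu_p+j}
= \sum_{j=0}^{n} a_{h_1;(\nu_1-m)+(j+m)}\cdots a_{h_p;(\nu_p-m)+(j+m)},
\]
which after reindexing $j' = j+m$ differs from $\sum_{j'} a_{h_1;(\nu_1-m)+j'}\cdots$ only by at most $|m|$ terms at each end, each of which is $O(1)$ by the uniform boundedness hypothesis; hence the difference is $O(1) = o(n)$. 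So without loss of generality we may assume all $\nu_i \ge 0$. What remains is to show that for a \emph{single} coordinate, say the first, one may replace $\nu_1$ by $0$ at the cost of $o(n)$, while holding the others fixed; iterating this $p$ times over the coordinates (re-applying the harmless global shift as needed to keep things in range) yields the statement.

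For the single-coordinate step, write $S_\ell = \sum_{j} a_{h_1;\ell+j}\, a_{h_2;\nu_2+j}\cdots a_{h_p;\nu_p+j}$ and estimate $|S_{\ell+1} - S_\ell|$. Factoring out the common tail $b_j := a_{h_2;\nu_2+j}\cdots a_{h_p;\nu_p+j}$, which is uniformly bounded by some constant $C$, we get
\[
|S_{\ell+1}-S_\ell| \;\le\; \sum_{j} \bigl| a_{h_1;\ell+1+j} - a_{h_1;\ell+j}\bigr|\,|b_j|
\;+\; (\text{boundary terms})
\;\le\; C\sum_{j} \bigl|a_{h_1;(\ell+j)+1} - a_{h_1;\ell+j}\bigr| + O(1).
\]
The sum on the right is bounded by $C\sum_{i=0}^{n-h_1-1} |a_{h_1;i+1}-a_{h_1;i}|$, which is $o(n)$ precisely by the vanishing mean variation condition \eqref{vmv cond} for the diagonal $k = h_1$. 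Telescoping from $\ell = \nu_1$ down to $\ell = 0$ over the $|\nu_1|$ intermediate values gives $|S_{\nu_1} - S_0| \le |\nu_1|\cdot o(n) = o(n)$, since $\nu_1$ is a fixed integer independent of $n$. This handles the first coordinate; repeating for each of the $p$ coordinates (a fixed number of steps, each $o(n)$) and summing the errors completes the argument.

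The main obstacle, and the only place care is genuinely needed, is bookkeeping the boundary/out-of-range terms: the convention that $a_{k;j} = 0$ for $j < 0$, $j > n-k$, or $|k| > n$ means that as we shift, a bounded number of summands appear or disappear at the two ends of the range, and one must check these contribute only $O(1)$ — which they do, by uniform boundedness, so they are absorbed into the $o(n)$. One should also note that $p$ and the $h_i, \nu_i$ are fixed (not growing with $n$), so that multiplying an $o(n)$ bound by the constant number of telescoping steps and coordinates stays $o(n)$; this is implicit in the statement and is what makes the iteration legitimate.
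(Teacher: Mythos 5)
Your argument is correct and is essentially the paper's own proof: you shift one factor at a time, bound the resulting error by the fixed shift size $|\nu_i|$ times the sum of consecutive differences along the diagonal $h_i$ (the vanishing mean variation condition), using uniform boundedness of the remaining factors, and iterate over the $p$ coordinates. The only differences are cosmetic — you telescope the shift one step at a time and spell out the $O(1)$ boundary terms and the preliminary common-shift reduction, which the paper's triangle-inequality estimate handles implicitly.
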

\begin{proof}
Shift the index in the first term:

\begin{eqnarray*}
\sum_{j=0}^n a_{h_1; \nu_1 + j}a_{h_2;\nu_2+j}\cdots a_{h_p;\nu_p+j}
&=& \sum_{j=0}^n a_{h_1;  j}a_{h_2;\nu_2+j}\cdots a_{h_p;\nu_p+j}
\\
&&
+ \sum_{j=0}^n \left(a_{h_1; \nu_1 + j}-a_{h_1; j}\right)a_{h_2;\nu_2+j}\cdots a_{h_p;\nu_p+j}
\end{eqnarray*}
Since the entries of $A_n$ are uniformly bounded, we can assume that they are bounded by 1.  Thus the error from shifting the first term is
\begin{eqnarray*}
\left|  \sum_{j=0}^n \left(a_{h_1; \nu_1 + j}-a_{h_1; j}\right)a_{h_2;\nu_2+j}\cdots a_{h_p;\nu_p+j} \right|
& \leq & \sum_{j=0}^n \left| a_{h_1; \nu_1 + j}-a_{h_1; j}\right|
\\
&\leq & |\nu_1| \sum_{j=0}^n \left| a_{h_1;  j+1}-a_{h_1; j}\right|
\\
&=&  o(n)
\end{eqnarray*}
by the triangle inequality.  Shifting the indices in the remaining terms similarly will result in at most an $o(n)$ error.
\end{proof}

With this lemma we can proceed to the proof of Theorem~\ref{main trace}.

\begin{proof}[Proof of Theorem~\ref{main trace}]
As in \cite{kamusz53},
we write $A_n$ as the sum of  diagonals:
\begin{equation} \label{diag}
A_n = \sum_{|k| \leq k_0}  D_k ,
\end{equation}
where $D_{k}$ denotes the $k$th diagonal matrix of $A_n$, i.e.
\[
D_{k} = \bmat
 & & & &  a_{k;0} \cr
& & & & &  a_{k;1} \cr
& & & & & &  \ddots \cr
& & & &  & & & a_{k:n-k} \cr
& \cr
& \cr
\emat
\]
Thus, $A_n^r (A_n^*)^s$ is the sum
$$ \sum  \prod_{j=1}^r D_{h_j}  \prod_{l=1}^s \overline{D}_{-k_l}.$$
where the sum is taken over all possible combinations. The main diagonal entries of each product in the last sum will  be nonzero only if
$$|h|-|k|= \sum h_j - \sum k_j = 0.$$

Using the Kac, Murdock and Szeg\H{o} \cite{kamusz53} approach,  we can calculate the entries on the main diagonal of the product $\prod_{j=1}^r D_{h_j}  \prod_{l=1}^s \overline{D}_{-k_l}$.   For $|h|= |k|$, the $i$th term on the diagonal of the product is
\beq
\left( \prod_{j=1}^r D_{h_j}  \prod_{l=1}^s \overline{D}_{-k_l} \right)_{0;i} =  \prod_{j=1}^r  a_{h_j; \nu_j + i} \prod_{l=1}^s \overline{a}_{-k_l;\nu_{p+l}+i}
\eeq
where
\begin{equation*}
   \nu_j = \begin{cases}
                   h_1+\cdots+h_{j-1}+ h_j^- & \text{ if } 1 \leq j \leq p \\
                   h_p - k_1 - \cdots - k_j^- & \text{ if } p< j \leq q.
            \end{cases}
 \end{equation*}
and $h_j^-=\min\{h_j,0\}$. Therefore, we obtain
\beq
\tr \left[ A_n^r (A_n^*)^s\right]  =  \sum_{j=0}^{n}  \sum_{|h|=|k|} \prod_{j=1}^r a_{h_j; \nu_j + i} \prod_{l=1}^s \overline{a}_{-k_l;\nu_{r+l}+i}
\label{trace}
\eeq

Since $\{A_n\}$ is banded, the sum $ \sum_{|h|=|k|}$ above is finite.
Thus we can use Lemma~\ref{shiftlemma}   to shift the indices modulo an $o(n)$ term:
\beq \label{trace2}
 \text{Tr}[A_n^r(A^*_n)^s] = \sum_{j=0}^n \sum_{|h|=|k|} \prod_{l=1}^r a_{h_l; j} \prod_{m=1}^s \overline{a_{-k_m;j}}  + o(n).
\eeq

Finally, it follows from the $\mu$-distribution of $\{A_n\}$ that
\begin{eqnarray*}
 \frac{1}{n} \text{Tr}[A_n^r(A^*_n)^s] & = & \int_X \sum_{|h|=|k|} \prod_{l=1}^r z_{h_l} \prod_{m=1}^s \bar{z}_{k_m} \, d\mu(\pmb{z}) + o(1) \\
  & = & \frac{1}{2\pi} \int_{-\pi}^\pi \int_X \prod_{m=1}^s \bar{z}_{k_m} \, d\mu(\pmb{z}) e^{i(|h|-|k|)} \, d\mu(\pmb{z}) \, dt + o(1)\\
  & = & \frac{1}{2\pi} \int_{-\pi}^\pi \int_X P^r(\pmb{z},t) \, \bar{P}^s(\pmb{z},t) \, d\mu(\pmb{z}) \, dt +o(1)
\end{eqnarray*}
as desired.
\end{proof}

For sequences of normal band matrices, one can use the Stone-Weierstrass Theorem and the functional calculus for normal operators together with our previous trace formula to obtain their  LSD.   More precisely, we have the following result.

\begin{cor}
Let  $\{A_n\}$ be a sequence of normal matrices of fixed band size $k_0$. If $\{A_n\} \in \mathcal{VMV}$ is $\mu$-distributed on some compact $X \subset \C^{2k_0+1}$, then  we have
\begin{equation} \label{Trace 4}
   \lim_{n \to \infty} \frac{1}{n} \tr\left[ \varphi(A_n)  \right] = \frac{1}{2\pi}  \int_{-\pi}^{\pi} \int_{X}  \varphi(P( \pmb{z} , t))\, d\mu(\mbf{z})  \, dt.
\end{equation}
for any $\varphi \in C_c(\C)$, the space of compactly supported continuous functions on $\C$.
\end{cor}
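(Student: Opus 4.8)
The plan is to deduce the corollary from Theorem~\ref{main trace} by a moment/density argument, with the normality hypothesis doing two essential jobs. First I would record that a normal band matrix with uniformly bounded entries has uniformly bounded spectrum: writing $A_n=\sum_{|k|\le k_0}D_k$ as in \eqref{diag}, each $D_k$ is a weighted shift with $\|D_k\|_\infty=\max_j|a_{k;j}(n)|$, so $\|A_n\|_\infty\le (2k_0+1)\sup_{n,j,k}|a_{k;j}(n)|=:R<\infty$, and every eigenvalue of every $A_n$ lies in the fixed compact disc $K:=\{z\in\C:|z|\le R\}$. Consequently the empirical spectral measures $\nu_n:=\frac1n\sum_{k=1}^n\delta_{\lambda_k(A_n)}$ are all supported in $K$, the family $\{\nu_n\}$ is tight, and it suffices to identify its weak-$\ast$ limit.

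Second, I would use normality to identify the traces computed in Theorem~\ref{main trace} with the moments of $\nu_n$. By the spectral theorem for normal matrices, $\tr\!\left[A_n^r(A_n^*)^s\right]=\sum_{k=1}^n\lambda_k(A_n)^r\,\overline{\lambda_k(A_n)}^{\,s}=\int_\C z^r\bar z^s\,d\nu_n(z)$, so Theorem~\ref{main trace} becomes the statement that
\begin{align*}
\lim_{n\to\infty}\int_\C z^r\bar z^s\,d\nu_n(z)
&=\frac{1}{2\pi}\int_{-\pi}^{\pi}\!\int_X P(\pmb z,t)^r\,\overline{P(\pmb z,t)}^{\,s}\,d\mu(\pmb z)\,dt\\
&=\int_\C z^r\bar z^s\,d\nu(z),
\end{align*}
where $\nu$ is the push-forward of the probability measure $\frac{1}{2\pi}\,d\mu(\pmb z)\,dt$ on $X\times[-\pi,\pi]$ under $(\pmb z,t)\mapsto P(\pmb z,t)$; since $|P(\pmb z,t)|\le\sum_{|k|\le k_0}|z_k|\le R$ on $X$, the measure $\nu$ is also supported in $K$.

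Third, a Stone--Weierstrass density step finishes the argument. The unital $\ast$-subalgebra of $C(K,\C)$ generated by the coordinate function $z$ — namely the polynomials in $z$ and $\bar z$ — separates the points of $K$ and is closed under complex conjugation, hence is dense in $C(K,\C)$. Given $\varphi\in C_c(\C)$ and $\varepsilon>0$, choose such a polynomial $q(z,\bar z)=\sum c_{r,s}z^r\bar z^s$ with $\sup_{z\in K}|\varphi(z)-q(z)|<\varepsilon$. Since $\nu_n$ and $\nu$ are supported in $K$ and $\varphi(A_n)$ has eigenvalues $\varphi(\lambda_k(A_n))$, we get $\bigl|\tfrac1n\tr\varphi(A_n)-\int q\,d\nu_n\bigr|\le\varepsilon$ and $\bigl|\int\varphi\,d\nu-\int q\,d\nu\bigr|\le\varepsilon$ for every $n$, while $\int q\,d\nu_n\to\int q\,d\nu$ by linearity and the moment convergence above. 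Letting $n\to\infty$ and then $\varepsilon\to0$ yields $\frac1n\tr\varphi(A_n)\to\int\varphi\,d\nu=\frac{1}{2\pi}\int_{-\pi}^{\pi}\int_X\varphi(P(\pmb z,t))\,d\mu(\pmb z)\,dt$, which is \eqref{Trace 4}.

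I do not expect a genuine obstacle here: all the analytic content sits in Theorem~\ref{main trace}. The one point that must be watched — and the precise reason normality is hypothesized — is the identity $\tr[A_n^r(A_n^*)^s]=\sum_k\lambda_k(A_n)^r\,\overline{\lambda_k(A_n)}^{\,s}$, which fails for general matrices; without it the moments of $\nu_n$ are not controlled by the traces of Theorem~\ref{main trace}, and the conjugation-closed algebra needed to invoke Stone--Weierstrass on a compact subset of $\C$ is unavailable. (This is exactly why the non-normal case is instead phrased for analytic $\varphi$.)
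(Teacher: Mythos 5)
Your proposal is correct and follows exactly the route the paper indicates (the paper only sketches it in one sentence): combine the trace formula of Theorem~\ref{main trace} with the spectral theorem for normal matrices to read $\frac1n\tr[A_n^r(A_n^*)^s]$ as mixed moments of the empirical spectral measures, then conclude by Stone--Weierstrass density of polynomials in $z,\bar z$ on a fixed compact disc containing all spectra. Your write-up supplies the uniform spectral bound and the approximation details that the paper leaves implicit, so no gap remains.
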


\medskip


%
%
%
\subsection{Sequences of non-band size matrices}

We now extend our previous results to sequences of matrices of arbitrary band size. We start with a basic lemma that gives a general condition for when two sequences have the same LSD.

\smallskip

\begin{lem}
\label{von neumann}
Let $\{A_n\}$ and $\{B_n\}$ be two sequences of matrices that satisfy $\|A_n\|_\infty =\Oo(1)=\|B_n\|_\infty$. If $\|A_n - B_n\|_{tr}=o(n)$, then we have
$$ \left| \sum_{k=1}^n \varphi(\lambda_k(A_n)) - \sum_{k=1}^n \varphi(\lambda_k(B_n)) \right| =o(n)$$
for any analytic function $\varphi$ on $D_\M$, i.e. $\varphi  \in C^\omega(D_\M)$. If the sequences are Hermitian, then we can choose $\varphi \in C(D_\M)$.
\end{lem}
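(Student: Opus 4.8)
The plan is to bound the difference of the two eigenvalue sums in terms of the trace norm $\|A_n - B_n\|_{tr}$ together with the uniform spectral bound. The natural tool is a Lipschitz-type estimate for the map $X \mapsto \sum_k \varphi(\lambda_k(X))$ on the set of matrices with spectrum in a fixed compact region. First I would fix a constant $\M$ with $\|A_n\|_\infty, \|B_n\|_\infty \le \M$ for all large $n$, so that all eigenvalues of $A_n$ and $B_n$ lie in the disk $D_\M = \{|z| \le \M\}$; this is where the hypothesis $\|A_n\|_\infty = \Oo(1) = \|B_n\|_\infty$ is used. The quantity $\sum_k \varphi(\lambda_k(X)) = \tr[\varphi(X)]$ when $\varphi$ is analytic on $D_\M$, so the claim becomes $|\tr[\varphi(A_n)] - \tr[\varphi(B_n)]| = o(n)$.

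For the Hermitian case I would argue first, since it is cleaner and the non-Hermitian case reduces to it in spirit. When $A_n, B_n$ are Hermitian with spectra in $[-\M,\M]$, I would use the Lidskii-type inequality
\[
\left| \tr[\varphi(A_n)] - \tr[\varphi(B_n)] \right| \le \sum_{k=1}^n \left| \varphi(\lambda_k(A_n)) - \varphi(\lambda_k(B_n)) \right|,
\]
valid after matching eigenvalues in decreasing order, and then combine the modulus of continuity of $\varphi$ on $[-\M,\M]$ with the Lidskii majorization $\sum_k |\lambda_k(A_n) - \lambda_k(B_n)| \le \|A_n - B_n\|_{tr}$. For continuous (not Lipschitz) $\varphi$ one has to be slightly more careful: approximate $\varphi$ uniformly on $[-\M,\M]$ by a polynomial $p$ within $\varepsilon$, so that the $\varphi$-sums and $p$-sums differ by at most $\varepsilon n$, and for the polynomial $p$ use that $|p(x)-p(y)| \le L_p |x-y|$ on $[-\M,\M]$ to get $|\tr[p(A_n)] - \tr[p(B_n)]| \le L_p \|A_n - B_n\|_{tr} = o(n)$. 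Letting $\varepsilon \to 0$ after $n \to \infty$ finishes this case. For the general (non-normal) case with $\varphi \in C^\omega(D_\M)$, I would instead write $\varphi(A_n) - \varphi(B_n)$ using the power series $\varphi(z) = \sum_m c_m z^m$ (convergent on a neighborhood of $D_\M$), reduce to monomials via $\tr[A_n^m] - \tr[B_n^m] = \tr\big[\sum_{i+j=m-1} A_n^i (A_n - B_n) B_n^j\big]$, and bound each term by $\|A_n\|_\infty^i \|A_n - B_n\|_{tr} \|B_n\|_\infty^j \le m \M^{m-1} \|A_n - B_n\|_{tr}$ using $|\tr[XYZ]| \le \|X\|_\infty \|Y\|_{tr} \|Z\|_\infty$; summing the series $\sum_m |c_m| m \M^{m-1}$, which converges since the radius of convergence exceeds $\M$, yields $|\tr[\varphi(A_n)] - \tr[\varphi(B_n)]| \le C_\varphi \|A_n - B_n\|_{tr} = o(n)$.

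The main obstacle I anticipate is the non-normal case: here $\sum_k \varphi(\lambda_k(A_n))$ need not coincide with $\tr[\varphi(A_n)]$ unless one is careful, but in fact for \emph{any} square matrix and \emph{any} polynomial (hence any entire or locally analytic $\varphi$, via the holomorphic functional calculus / Jordan form) one does have $\tr[\varphi(A_n)] = \sum_k \varphi(\lambda_k(A_n))$ counting algebraic multiplicity, so this identity must be invoked explicitly — this is exactly why the hypothesis is restricted to analytic $\varphi$ in the non-Hermitian setting, since for merely continuous $\varphi$ the spectral side is not a function of the matrix in any trace-compatible way. A secondary technical point is justifying the interchange of the $n \to \infty$ limit with the power-series summation or the polynomial approximation; this is routine given the uniform-in-$n$ bounds $\|A_n\|_\infty, \|B_n\|_\infty \le \M$ and the convergence of $\sum_m |c_m| m \M^{m-1}$, so I would handle it by a standard $\varepsilon$/tail argument rather than dwelling on it.
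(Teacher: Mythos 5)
Your proposal is correct, and its core coincides with the paper's argument: reduce to the monomials $z^m$, expand $A_n^m-B_n^m$ into finitely many products each containing one factor of $A_n-B_n$, and bound each trace by (spectral norm factors, which are $\Oo(1)$ by hypothesis) times $\|A_n-B_n\|_{tr}=o(n)$ --- your H\"older-type bound $|\tr[XYZ]|\le\|X\|_\infty\|Y\|_{tr}\|Z\|_\infty$ plays exactly the role of Von Neumann's trace inequality in the paper. The differences are in the reductions. The paper invokes Mergelyan's theorem once, which handles both cases uniformly and in particular covers $\varphi$ continuous on $D_\M$ and analytic only in its interior; your power-series route needs $\varphi$ analytic on a neighborhood of $D_\M$ (fine for $C^\omega(D_\M)$ as stated, but slightly narrower than what Mergelyan allows), and your identity $\sum_k\varphi(\lambda_k(A_n))=\tr[\varphi(A_n)]$ for non-normal matrices is the same implicit step the paper makes for polynomials. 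For the Hermitian case you bring in the $p=1$ Wielandt--Hoffman/Lidskii majorization $\sum_k|\lambda_k(A_n)-\lambda_k(B_n)|\le\|A_n-B_n\|_{tr}$ plus Weierstrass approximation; the paper does not need this here (its monomial estimate already covers continuous $\varphi$ after polynomial approximation on $[-\M,\M]$), though it does use exactly that inequality later in its proof of the First Szeg\H{o} Limit Theorem corollary. Both routes are valid; the paper's is marginally more economical, yours makes the Lipschitz mechanism in the Hermitian case more explicit.
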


\begin{proof}
By linearity of the trace and Mergelyan's Theorem, it suffices to consider $\varphi(z)=z^m$ for $m \in \N$.  Writing $A_n^m = ((A_n-B_n)+B_n)^m$ and using the elementary properties of the trace, we have
$$ \text{Tr}[A_n^m] - \text{Tr}[B_n^m] = \text{Tr}[(A_n-B_n) \, C_n] $$
for some matrix  $C_n$ that is a finite sum of products of $A_n-B_n$ and $B_n$. By the sub-multiplicative property of the the spectral norm, we easily deduce that $\|C_n\|_\infty=\Oo(1)$. Finally, we apply Von Neumann's trace inequality (see \cite{hj85}, Theorem 7.4.10, p. 433) to obtain
\begin{eqnarray*}
\left| \text{Tr}[(A_n-B_n) \, C_n] \right|   \leq   \sum_{k=1}^n \sigma_k(A_n-B_n) \, \sigma_k(C_n) =\Oo( \| A_n-B_n\|_{tr} ) =o(n)
\end{eqnarray*}
as desired.
\end{proof}

\smallskip

The basic idea underlying the proof of the results below is to approximate the LSD of the original matrix sequence by the LSD of another sequence of band-matrices with fixed band size. Consequently, we make the assumption similar to that of  Kac, Murdock and Szeg\H{o} \eqref{cond szego 0} that $\{A_n\}$ satisfies the condition
\begin{equation} \label{cond szego}
\M :=  \sup_n \left[ \sum_{|k| \leq n} \max_{0 \leq j \leq n-k} |a_{k;j}(n)| \right] <\infty.
\end{equation}
From Gershgorin's Circle Theorem \cite{hj85}, the spectrum of $A_n$ lies inside the closed disk $D_{\M}=\{z:|z| \leq \M\}$. Moreover, by Qi's extension of Gergorin's Theorem \cite{qi84}, the singular values of $A_n$ are also contained  $[0,\M]$.

\smallskip

\begin{theo}  \label{main normal}
Let $\{A_n\} \in \mathcal{VMV}$ be a $\mu$-distributed on some compact $X \subset l^1$. Then, we have
\begin{equation*}
   \lim_{n \to \infty} \frac{1}{n} \sum_{k=1}^n  \varphi \left(\lambda_k( A_n) \right) = \frac{1}{2\pi}\int_{-\pi}^{\pi} \int_{X} \varphi(F(\pmb{z},t))  \, d\mu(\pmb{z}) \, dt
\end{equation*}
for any $\varphi \in C^\omega(D_{\M})$. In addition,  if the $A_n$'s are assumed to be Hermitian, then we can take $\varphi \in C([-\M,\M])$.
\end{theo}

\begin{proof}   For any $\varepsilon>0$, condition \eqref{cond szego} implies there exists $k_0 \in \N$ such that
\begin{equation} \label{main normal 1}
 \sum_{|k|>k_0} \max_{1 \leq j \leq n-k} |a_{k;j}|   < \varepsilon.
\end{equation}
Let $\{B_n\}$ be the sequence of band-matrices of size $k_0$ obtained from $A_n$, i.e.  $B_n=[b_{k;j}]$ with
$$ b_{k;j}= \begin{cases}
                       a_{k;j} & \text{ if } |k| \leq k_0\\
                       0 & \text{otherwise}
                   \end{cases}$$
In particular, by Qi's result we deduce $\|A_n-B_n\|_{tr} = \Oo(n \varepsilon)$.  Thus, by  Lemma~\ref{von neumann}, it suffices to prove the result for $\{B_n\}$. But this follows from our trace formula in Theorem \ref{main trace}. Indeed, if $\pi_{k_0}:l^1 \to C^{2k_0+1}$ denotes the standard projection defined by
$$ \pi_{k_0} ( \pmb{z} ) = (z_{-k_0},...,z_{k_0})$$
then the sequence $\{B_n\}$ is $(\pi_{k_0})_*(\mu)$-distributed on the compact set $\pi_{k_0}(X) \subset \C^{2k_0+1}$. Hence,  we have
\begin{equation} \label{normal 2}
 \frac{1}{n}\text{Tr}[ B_n^m ]  = \frac{1}{2\pi}  \int_{-\pi}^{\pi} \int_{X}  F^m(\pi_{k_0}(\pmb{z}),t)  \, d\mu(\pmb{z}) \, dt + o(1).
\end{equation}
Moreover, by the compactness of $X$, we can choose $k_0$ large enough so that
$$| F(\pmb{z},t) - F(\pi_{k_0}(\pmb{z}),t)| < \varepsilon $$
uniformly in $\pmb{z}$ and $t$. Consequently, it follows
\begin{eqnarray*}
   \frac{1}{n} \text{Tr}[A_n^m] & = & \frac{1}{n} \text{Tr}[B_n^m] + \Oo(\varepsilon) \\
      & = & \frac{1}{2\pi} \int_{-\pi}^{\pi} \int_X F^m(\pmb{z};t) \, d\mu(\pmb{z}) \, dt + \Oo(\varepsilon)+o(1).
 \end{eqnarray*}
The conclusion is then an immediate consequence of the linearity of the trace and Mergelyan's Theorem.
 \end{proof}

\smallskip

As a first consequence of the above result, we extend the class of \KMS sequences of matrices  to sequences whose diagonals are modeled by Riemann integrable functions.

 \begin{theo}
 \label{cor main 2}
Let $\{A_n\}$ be a sequence of matrices such that  there exists a sequence of partitions $\{\mathcal{P}_n\}$ of $[0,1]$ with $\mathcal{P}_n=\{t_{j;n}\}_{j=0}^n$ and $mesh(\mathcal{P}_n)=o(1)$ such that
\[ a_{k;j}(n) = \hat{a}_k(t_{j;n}),
\]
where  $\hat{a}_k$ are Riemann integrable functions on $[0,1]$ satisfying
\begin{equation} \label{n-cond}
 \mathcal{N} := \sum_{k=0}^\infty \| \hat{a}_k \|_\infty < \infty.
\end{equation}
For any $\varphi \in C^\omega(D_{\mathcal{N}})$, we have
\begin{equation}
   \lim_{n \to \infty} \frac{1}{n}  \sum_{k=1}^n \varphi(\lambda_k(A_n)) =  \frac{1}{2\pi} \int_{-\pi}^{\pi} \int_0^1 \varphi( a(s,t))  \, ds \, dt
\end{equation}
where $a(s,t)=\sum_{k \in \Z} \hat{a}_k(s) e^{ikt}$. If the $A_n$ are Hermitian, then one can take $\varphi \in C([-\mathcal{N},\mathcal{N}])$.
\end{theo}

\begin{proof}
From the integrability of the $\hat{a}_k$, we have $\{{A}_n\} \in \mathcal{VMV}$. Let $\pmb{\alpha}:[0,1] \to l^1$ be the map given by \eqref{alpha map}.
Under condition \eqref{n-cond}, it is not hard to verify that the closure of the set $\pmb{\alpha}([0,1])$ is compact in $l^1$. By example (iii) of Section 2.2, the sequence
\begin{equation} \label{seq g}
  \hat{\pmb{a}}_j(n)=\{ ...,0,\hat{a}_{-j}(t_{j;n}),\hat{a}_{-j+1}(t_{j;n}),\dots ,\hat{a}_{n-j}(t_{j;n}),0,... \}
\end{equation}
is asymptotically equidistributed for the push-forward measure $m_{\pmb{\alpha}}$.  Consequently, Theorem~\ref{main normal} implies that
\begin{eqnarray*}
   \lim_{n \to \infty} \frac{1}{n} \tr[ A_n^m]
      & = & \frac{1}{2\pi} \int_{-\pi}^{\pi}  \int_{X}  F^m(\mathbf{z},t)  \, dm_{\pmb{\alpha}}(\mathbf{z}) \, d t\\
      & = & \frac{1}{2\pi} \int_{-\pi}^{\pi} \int_0^1 F^m(\pmb{\alpha}(s),t) \, ds \, d t \\
      & = & \frac{1}{2\pi} \int_{-\pi}^{\pi} \int_0^1a^m(s,t) \, ds \, d t\\
\end{eqnarray*}
as desired.
\end{proof}

\smallskip

The results of the above theorem also hold if the entries of $A_n$ are asymptotically modeled by Riemann integrable functions.

\begin{cor} Let $\{A_n\}$ be a sequence of matrices that satisfies \eqref{cond szego} and let $\{\hat{A}_n\}$ be a sequence as in Theorem \ref{cor main 2}. If we assume
\begin{equation} \label{A1}
|a_{k;j} - \hat{a}_k(t_{j;n})|=o(1),
\end{equation}
then we have
\begin{equation}
   \lim_{n \to \infty} \frac{1}{n}  \sum_{k=1}^n \varphi(\lambda_k(A_n)) =  \frac{1}{2\pi} \int_{-\pi}^{\pi} \int_0^1 \varphi( a(s,t))  \, ds \, dt
\end{equation}
for any $\varphi \in C^\omega(D_{\M'})$ with $\M'=\max\{\M,\mathcal{N}\}$. In the Hermitian case, $C^\omega(D_{\M'})$ is replaced by $C([-\M',\M'])$.
\end{cor}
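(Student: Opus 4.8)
The plan is to deduce the statement from Theorem~\ref{cor main 2} by showing that $\{A_n\}$ and $\{\hat A_n\}$ have the same limiting eigenvalue distribution, via Lemma~\ref{von neumann}. Two hypotheses of that lemma have to be checked: that $\|A_n\|_\infty$ and $\|\hat A_n\|_\infty$ are $\Oo(1)$, and that $\|A_n-\hat A_n\|_{tr}=o(n)$. The first is immediate from the paragraph preceding Theorem~\ref{main normal}: by Qi's extension of Gershgorin's theorem, condition \eqref{cond szego} gives $\|A_n\|_\infty\le\M$, and since \eqref{n-cond} furnishes the analogous diagonal bound for $\hat A_n$ with constant $\mathcal{N}$, the same reasoning gives $\|\hat A_n\|_\infty\le\mathcal{N}$. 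Hence both sequences have spectra and singular values inside $D_{\M'}$ with $\M'=\max\{\M,\mathcal{N}\}$.

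The core of the argument is the trace-norm estimate for $E_n:=A_n-\hat A_n$, and I would carry it out exactly as in the proof of Theorem~\ref{main normal}. Fix $\varepsilon>0$ and split $E_n=E_n^{(k_0)}+R_n$ into its band-$k_0$ truncation and the remaining tail. Writing $e_{k;j}(n)=a_{k;j}(n)-\hat a_k(t_{j;n})$, subadditivity of the trace norm together with the elementary fact that the matrix supported on the $k$-th diagonal of $E_n$ has trace norm $\sum_j|e_{k;j}(n)|\le n\max_j|e_{k;j}(n)|$ gives
\[
\frac1n\|E_n^{(k_0)}\|_{tr}\le\sum_{|k|\le k_0}\max_j|e_{k;j}(n)|,\qquad \frac1n\|R_n\|_{tr}\le\sum_{|k|>k_0}\max_j|e_{k;j}(n)|.
\]
For the tail we bound $|e_{k;j}(n)|\le|a_{k;j}(n)|+\|\hat a_k\|_\infty$, so by \eqref{cond szego} and \eqref{n-cond} we may choose $k_0$, depending only on $\varepsilon$, so large that the second sum is $<2\varepsilon$ for every $n$. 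With $k_0$ now fixed, hypothesis \eqref{A1} gives $\max_j|e_{k;j}(n)|=o(1)$ as $n\to\infty$ for each of the finitely many $k$ with $|k|\le k_0$, so the first sum is $o(1)$. Hence $\limsup_n\frac1n\|A_n-\hat A_n\|_{tr}\le2\varepsilon$, and letting $\varepsilon\to0$ yields $\|A_n-\hat A_n\|_{tr}=o(n)$.

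With both hypotheses in hand, Lemma~\ref{von neumann} gives $\bigl|\sum_{k=1}^n\varphi(\lambda_k(A_n))-\sum_{k=1}^n\varphi(\lambda_k(\hat A_n))\bigr|=o(n)$ for every $\varphi\in C^\omega(D_{\M'})$, while Theorem~\ref{cor main 2} applied to $\{\hat A_n\}$ (noting $D_{\mathcal{N}}\subset D_{\M'}$, so any $\varphi\in C^\omega(D_{\M'})$ is admissible there) identifies $\lim_n\frac1n\sum_k\varphi(\lambda_k(\hat A_n))$ with $\frac1{2\pi}\int_{-\pi}^{\pi}\int_0^1\varphi(a(s,t))\,ds\,dt$; combining the two proves the general case. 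In the Hermitian case it suffices to treat monomials $\varphi(z)=z^m$, which are entire, so the above applies to give $\frac1n\tr[A_n^m]\to\frac1{2\pi}\int_{-\pi}^{\pi}\int_0^1 a^m(s,t)\,ds\,dt$; the extension to all $\varphi\in C([-\M',\M'])$ then follows from the Weierstrass approximation theorem, once one observes that the eigenvalues $\lambda_k(A_n)$ are real and lie in $[-\M',\M']$ and that Hermiticity of $A_n$ together with \eqref{A1} forces $a(s,t)$ to be real-valued for a.e.\ $s$, so that the right-hand side is well defined and unaffected by that null set.

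I expect the trace-norm estimate of the second paragraph to be the only delicate point: because \eqref{A1} controls the difference one diagonal at a time, the summability conditions \eqref{cond szego}--\eqref{n-cond} are genuinely needed in order to truncate first to a fixed finite band, and one must keep the order of limits straight — $n\to\infty$ with $k_0$ fixed, then $\varepsilon\to0$ — which is legitimate precisely because $k_0$ is chosen as a function of $\varepsilon$ alone.
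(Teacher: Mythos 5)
Your proposal is correct and follows essentially the same route as the paper: truncation to a fixed band size using \eqref{cond szego} and \eqref{n-cond}, the bound $\|A_n-\hat A_n\|_{tr}=o(n)$ from \eqref{A1} on the retained diagonals, Lemma~\ref{von neumann}, and then Theorem~\ref{cor main 2} applied to $\{\hat A_n\}$. The only differences are organizational — you estimate the trace norm of the full difference directly (splitting it into band and tail) instead of first replacing both sequences by their band truncations, and you spell out the Hermitian/Weierstrass step that the paper leaves implicit — neither of which changes the substance of the argument.
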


\begin{proof}
Arguing as in the proof of Theorem \ref{main normal}, we can use Lemma \ref{von neumann} together with conditions \eqref{cond szego} and \eqref{n-cond} to reduce the problem to the case when $\{A_n\}$ and $\{\hat{A}_n\}$ are sequences of band matrices of fixed band size $k_0$. Under the assumption \eqref{A1} and Qi's Theorem, one has $\|A_n - \hat{A}_n \|_{tr} =o(n)$, so another application of Lemma \ref{von neumann} yields
\begin{equation}
 \frac{1}{n} \tr[\varphi(A_n)]  = \frac{1}{n} \tr[\varphi(\hat{A}_n)] +o(1)
 \end{equation}
 Moreover, by Corollary \ref{cor main 2}, we also have
 \begin{equation}
  \frac{1}{n} \sum_{k=1}^n \varphi(\lambda_k(\hat{A}_n))  =   \frac{1}{2\pi} \int_{-\pi}^{\pi} \int_0^1 \varphi( a(s,t))  \, ds \, dt +o(1).
 \end{equation}
The conclusion is then an immediate consequence of last two estimates.
\end{proof}


We conclude this section with two applications of the above results. First, we use Theorem \ref{main normal} to give a new proof of Szeg\H{o}'s First Limit Theorem. We state Szeg\H{o}'s Theorem in its most general form, i.e. for real-valued symbols $a \in L^1(\D)$ as considered by   Tyrtyshnikov and Zamarashkin \cite{TyZa98}.

\begin{cor}  (First Szeg\H{o}'s Limit Theorem)
Let $T(a)$ be a Toeplitz matrix with real valued symbol $a \in L^1(\D)$.  Then, we have
\begin{equation}
   \lim_{n \to \infty} \frac{1}{n}  \sum_{k=1}^n \varphi(\lambda_k(A_n))
   = \frac{1}{2\pi} \int_{-\pi}^{\pi} \varphi(a(t))   \, dt
\label{cor42}
\end{equation}
for any $\varphi \in C_c(\R)$, the space of compactly  supported functions on $\R$.
\end{cor}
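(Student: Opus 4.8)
The plan is to recover the classical First Szeg\H{o} Limit Theorem as a special case of Theorem~\ref{main normal} by a routine approximation argument that handles the weakening of the symbol's regularity from $\mathcal{A}(\D)$ (or continuity) to $L^1(\D)$, and the weakening of the test function class from $C^\omega$ (or $C([-\M,\M])$) to $C_c(\R)$. First I would dispose of the easy case: if the real-valued symbol $a$ lies in the Wiener algebra $\mathcal{A}(\D)$, then $\pmb{a}=\{\hat a_k\}\in l^1$, the Toeplitz sequence $\{T_n(a)\}$ is in $\vmv$ and is $\delta_{\pmb a}$-distributed on the one-point compact subspace $\{\pmb a\}\subset l^1$ (examples (i) of \S2.1 and \S2.2), and moreover $T_n(a)$ is Hermitian since $a$ is real-valued. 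Theorem~\ref{main normal} in its Hermitian form then gives \eqref{cor42} directly for every $\varphi\in C([-\M,\M])$, and since every $\varphi\in C_c(\R)$ restricts to such a function once $\M$ is large enough (or rather, one uses that the eigenvalues all lie in $[-\M,\M]$), this settles the $\mathcal{A}(\D)$ case. Alternatively, one can run Theorem~\ref{cor main 2} with $\hat a_k$ constant functions on $[0,1]$, which trivially are Riemann integrable.

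The substantive step is the passage from $a\in\mathcal{A}(\D)$ to a general real $a\in L^1(\D)$. Here I would approximate: choose real trigonometric polynomials $p_m\to a$ in $L^1(\D)$ (e.g. Fej\'er means of $a$, which are real when $a$ is real and converge in $L^1$). Each $p_m$ has finitely many nonzero Fourier coefficients, so $p_m\in\mathcal{A}(\D)$ and the previous paragraph applies to $T(p_m)$. To compare $T_n(a)$ with $T_n(p_m)$ I would invoke the trace-norm estimate $\|T_n(a)-T_n(p_m)\|_{tr}=\|T_n(a-p_m)\|_{tr}\le n\,\|a-p_m\|_{L^1(\D)}$, which is the standard bound for Toeplitz matrices (the $j$th singular value being controlled by partial Ces\`aro sums; this is exactly the ingredient used by Tyrtyshnikov and Zamarashkin \cite{TyZa98}). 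Given $\varepsilon>0$, pick $m$ with $\|a-p_m\|_{L^1}<\varepsilon$; then Lemma~\ref{von neumann}, in its Hermitian form with $\varphi\in C(D_\M)$ — note both $\{T_n(a)\}$ and $\{T_n(p_m)\}$ have spectral norm $\Oo(1)$ since $\|T_n(a)\|_\infty\le\|a\|_\infty$ when $a\in L^\infty$, and one first reduces to bounded $a$ by a further truncation, or one applies the inequality on the eigenvalue-sum side directly — yields
\[
\Bigl|\tfrac1n\sum_{k=1}^n\varphi(\lambda_k(T_n(a)))-\tfrac1n\sum_{k=1}^n\varphi(\lambda_k(T_n(p_m)))\Bigr|\le \mathrm{Lip}(\varphi)\cdot\tfrac1n\|T_n(a-p_m)\|_{tr}+o(1)=\Oo(\varepsilon)+o(1).
\]
Combining with the $\mathcal{A}(\D)$-case applied to $p_m$ and with $\tfrac1{2\pi}\int_{-\pi}^\pi|\varphi(a(t))-\varphi(p_m(t))|\,dt=\Oo(\varepsilon)$ (again by Lipschitz continuity of $\varphi$ on a bounded set, after passing to a subsequence along which $p_m\to a$ a.e. to control where $p_m$ exceeds $\M$), letting $\varepsilon\to0$ gives \eqref{cor42}.

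There is one wrinkle worth flagging: for unbounded $L^1$ symbols the eigenvalues of $T_n(a)$ need not stay in a fixed compact set, so "$\M$" is not literally defined. The clean way around this — and the step I expect to be the main obstacle to state cleanly rather than the main difficulty conceptually — is to truncate $a$ at level $N$, writing $a=a_N+r_N$ with $a_N=a\cdot\mathbf 1_{|a|\le N}\in L^\infty$ and $\|r_N\|_{L^1}\to0$, apply the bounded case to $a_N$ (where Theorem~\ref{main normal} / the $\mathcal{A}(\D)$ argument genuinely applies after a second Fej\'er approximation), and absorb $r_N$ by the same trace-norm/Lemma~\ref{von neumann} estimate together with the fact that a fixed $\varphi\in C_c(\R)$ is globally Lipschitz and bounded. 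Since $\varphi$ has compact support, only eigenvalues in that support contribute, which is precisely what makes the unboundedness harmless. Assembling these three approximations (Fej\'er in $L^1$, truncation of the symbol, and Lemma~\ref{von neumann}) in this order completes the proof.
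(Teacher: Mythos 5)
Your overall strategy coincides with the paper's: reduce to symbols in a dense subclass of $L^1(\D)$ (the paper takes $\mathcal{A}(\D)$, you take Fej\'er/trigonometric polynomials, which amounts to the same thing), control the perturbation through the trace-norm bound $\|T_n(a-b)\|_{tr}\le C\,n\,\|a-b\|_{L^1}$, exploit that a fixed $\varphi\in C_c(\R)$ may be taken Lipschitz, and then apply Theorem~\ref{main normal} (Hermitian case, $\delta_{\pmb{a}}$-distribution on the one-point set $\{\pmb a\}$) to the nice symbol. The differences are at the comparison step, and they matter for cleanliness. The paper does not use Lemma~\ref{von neumann} here at all: it first reduces to $\varphi\in C^1_c(\R)$ by density, then applies the $p=1$ Wielandt--Hoffman inequality together with the Mean Value Theorem to get $\sum_k|\varphi(\lambda_k(T_n(a)))-\varphi(\lambda_k(T_n(b)))|\le\|\varphi'\|_\infty\,\|T_n(a-b)\|_{tr}\le \pi^{-1}n\,\|\varphi'\|_\infty\,\|a-b\|_1$. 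This estimate requires no uniform bound on $\|T_n(a)\|_\infty$, so the unboundedness of an $L^1$ symbol causes no difficulty and no truncation $a=a_N+r_N$ is needed; your extra truncation layer is a patch for a problem created only by insisting on Lemma~\ref{von neumann}, whose hypothesis $\|A_n\|_\infty=\Oo(1)$ genuinely fails for unbounded $a$ (and which, even when applicable, yields only an $o(n)$ comparison via Mergelyan rather than the quantitative Lipschitz bound you wrote down --- the displayed inequality in your argument is in fact the Wielandt--Hoffman estimate in disguise, i.e. exactly the paper's tool). A second small difference: you cite the trace-norm bound as standard (via Tyrtyshnikov--Zamarashkin), whereas the paper proves it in two lines by observing that for $a>0$ the matrix $T_n(a)$ is positive semi-definite so $\|T_n(a)\|_{tr}=n\,a_0=\frac{n}{2\pi}\|a\|_1$, and then splitting $a=a^+-a^-$. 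So your proposal is correct in substance, but the route through Lemma~\ref{von neumann} should be replaced by the Wielandt--Hoffman/MVT argument (or your hedge ``apply the inequality on the eigenvalue-sum side directly'' should be made the main line), after which the symbol-truncation step and the a.e.-subsequence argument for the right-hand side become unnecessary, since $|\varphi(a(t))-\varphi(b(t))|\le\|\varphi'\|_\infty|a(t)-b(t)|$ already handles the limit of the integrals.
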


\begin{proof}  First, note that $T_n(a)$ is Hermitian since $a$ is real valued. For $a>0$, so $T_n(a)$ is positive semi-definite, we have
$$ \| T_n(a)\|_{tr} = \sum_{k=1}^n \lambda_k(T_n(a)) = n a_0 = \frac{n}{2\pi} \int_{-\pi}^{\pi} a(t)  \ dt = \frac{n}{2\pi} \|a\|_1 .$$
By writing  $a=a^+ - a^-$ with $a^+(t)=\max\{a(t),0\}$ and $a^-(t)=-\min\{a(t),0\}$, it follows that $ \| T_n(a) \|_{tr} \leq  \pi^{-1} n \, \|a\|_1$ for any $a \in L^1(\mathbb{T})$. By the density of $C^1_c(\R)$ in $C_c(\R)$ for the sup norm, we only need to consider $\varphi \in C^1_c(\R)$. By the Mean-Value Theorem and the $p$-Wielandt-Hoffman inequality \cite{hj85} with $p=1$, we have
\begin{eqnarray*}
\sum_{k=1}^n \left| \varphi(\lambda_k(T_n(a))) - \varphi(\lambda_k(T_n(b))) \right| & \leq &  \|\varphi'\|_\infty \ \|T_n(a) - T_n(b) \|_{tr} \\
 & = & \| \varphi \|_\infty \  \| T_n(a-b) \|_{tr} \\
 & \leq &  \frac{n}{\pi} \, \|\varphi'\|_\infty \  \|a-b\|_1
\end{eqnarray*}
for $a,b \in L^1(\mathbb{T})$.  Hence, it suffices to consider $a$ in a dense subset of $L^1(\D)$, e.g. $\mathcal{A}(\D)$

\smallskip

For such  $a$, it is readily seen that condition \eqref{cond szego} holds with $\mathcal{M}=\|a\|_1$. Moreover, the sequence $\{T_n(a)\}$ is $\mu$-distributed in $X=\{\pmb{a}\}$ with $\mu=\delta_{\pmb{a}}$, and $\pmb{a}=\{a_k\}$ the sequence made by the Fourier coefficients of $a$. The conclusion is then an immediate consequence of  Theorem \ref{main normal}. The last part is a consequence that $T_n(a)$ is Hermitian if $a$ is real-valued.
\end{proof}

\smallskip


As a second application, we present a natural  extension of the well-known $M(a,b)$-class of Jacobi matrices introduced by Nevai \cite{Ne79}. Recall, a Jacobi matrix $J(\pmb{a},\pmb{b}) \in M(a,b)$ if it has convergent diagonals, i.e.
$$ \lim_{k \to \infty} a_{k} = a \qquad \text{and} \qquad \lim_{k \to \infty} b_k=b >0.$$
The LSD of such matrices is well-known to be the arcsine distribution over the interval $[a-2b,a+2b]$. In the next result, we extend Nevai's class to sequences of matrices $\{A_n\}$ whose diagonals are given by density one convergent sequences.

\begin{cor}
\label{corN}
Let $\{A_n\}$ satisfy, for each $\epsilon > 0$,
$$\#\{ j :  \| \pmb{a}_j(n) - \pmb{a} \|_{l^1} > \epsilon \} = o(n)
$$
for some $\pmb{a}=\{a_k\}_{k \in \Z} \in l^1$, and $\pmb{a}_j(n)$  given by \eqref{ajn}.  For any $\varphi \in C^\omega(D_{\mathcal{N}})$, we have
\begin{equation}
  \lim_{n \to \infty}  \sum_{k=1}^n \varphi(\lambda_k(A_n)) =  \frac{1}{2\pi} \int_{-\pi}^{\pi}  \varphi( a(t) )  \, dt
\end{equation}
with $a(t)= \sum_{k \in \Z} a_k e^{ikt}$. If the $A_n$'s are Hermitian, then the statement holds for any $\varphi \in C([-\mathcal{N},\mathcal{N}])$.
\end{cor}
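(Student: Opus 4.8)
The plan is to read Corollary~\ref{corN} as the special case $\mu=\delta_{\pmb a}$ of Theorem~\ref{main normal}, exactly as anticipated by examples~(ii) of \S2.1 and \S2.2. Two things must be checked: that $\{A_n\}\in\mathcal{VMV}$, and that $\{A_n\}$ is $\delta_{\pmb a}$-distributed in the sense of Definition~\ref{mu distributed}. Granting these, Theorem~\ref{main normal} applies with $\mu=\delta_{\pmb a}$, and since $F(\pmb a,t)=\sum_{k\in\Z}a_k e^{ikt}=a(t)$ by \eqref{F}, the inner integral in \eqref{mainresult} collapses to $\varphi(a(t))$; here $\mathcal N$ is to be read as $\|\pmb a\|_{l^1}=\sum_{k\in\Z}|a_k|$, which by Gershgorin's theorem (and Qi's extension of it) confines the spectrum and the singular values of each $A_n$ to $D_{\mathcal N}$. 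The Hermitian clause is then immediate from the Hermitian clause of Theorem~\ref{main normal}.

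Both verifications rest on a single good-set/bad-set estimate. Fix $\epsilon>0$ and set $B_n(\epsilon)=\{\,0\le j\le n:\ \|\pmb a_j(n)-\pmb a\|_{l^1}>\epsilon\,\}$, so that $\#B_n(\epsilon)=o(n)$ by hypothesis. For the vanishing mean variation condition, fix a diagonal index $k$: whenever neither $j$ nor $j+1$ lies in $B_n(\epsilon)$ we have $|a_{k;j+1}(n)-a_{k;j}(n)|\le|a_{k;j+1}(n)-a_k|+|a_k-a_{k;j}(n)|\le 2\epsilon$, since a single coordinate is dominated by the $l^1$-norm, so such $j$ contribute at most $2\epsilon n$. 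The remaining indices number $o(n)$, and as the entries are uniformly bounded (by $\|\pmb a\|_{l^1}+\epsilon$ off $B_n(\epsilon)$, and overall by the standing boundedness assumption), their contribution is also $o(n)$. Letting $\epsilon\downarrow 0$ gives $\sum_j|a_{k;j+1}(n)-a_{k;j}(n)|=o(n)$, i.e.\ $\{A_n\}\in\mathcal{VMV}$. The identical split shows, for any $g$ bounded and continuous near $\pmb a$ in $l^1$ — in particular for the coordinate monomials entering the moment computation of Theorem~\ref{main trace} — that $\frac1n\sum_{j=0}^n g(\pmb a_j(n))\to g(\pmb a)$: off $B_n(\epsilon)$ continuity makes $g(\pmb a_j(n))$ within $o(1)$ of $g(\pmb a)$, while the $o(n)$ terms in $B_n(\epsilon)$ are negligible since $g$ is bounded. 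Hence $\{A_n\}$ is $\delta_{\pmb a}$-distributed.

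The one point needing care — and the main obstacle — is that Theorem~\ref{main normal} and Definition~\ref{mu distributed} want the columns $\pmb a_j(n)$ to lie in a fixed compact subset of $l^1$, whereas the $o(n)$ exceptional columns may be wild in $l^1$. Rather than forcing compactness, I would argue by comparison with the genuine Toeplitz truncation $T_n(a)$, which is $\delta_{\pmb a}$-distributed on the compact one-point set $\{\pmb a\}$ and for which the First Szeg\H{o} Limit Theorem (the preceding corollary) gives limiting eigenvalue statistics $\frac{1}{2\pi}\int_{-\pi}^{\pi}\varphi(a(t))\,dt$. Decomposing $A_n-T_n(a)$ into its diagonal pieces $D_k$, each with $\|D_k\|_{tr}=\sum_j|a_{k;j}(n)-a_k|$, yields
\[
\|A_n-T_n(a)\|_{tr}\ \le\ \sum_{|k|\le n}\sum_j|a_{k;j}(n)-a_k|\ \le\ \sum_{j=0}^n\|\pmb a_j(n)-\pmb a\|_{l^1},
\]
and the same good/bad split, now applied to the full $l^1$-norm and using a uniform bound on $\|\pmb a_j(n)\|_{l^1}$, bounds the right-hand side by $\epsilon n+o(n)$, hence by $o(n)$ after $\epsilon\downarrow 0$. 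Since $\|A_n\|_\infty,\|T_n(a)\|_\infty=\Oo(1)$, Lemma~\ref{von neumann} then forces $\{A_n\}$ and $\{T_n(a)\}$ to share the same limit of $\frac1n\sum_k\varphi(\lambda_k(\cdot))$ — for $\varphi\in C^\omega(D_{\mathcal N})$ in general, and for $\varphi\in C([-\mathcal N,\mathcal N])$ in the Hermitian case (when $a$ is real) — which is exactly the asserted value. Thus the substantive content is the routine good-set/bad-set estimate; everything else is an invocation of Lemma~\ref{von neumann} and of the already-established First Szeg\H{o} Limit Theorem.
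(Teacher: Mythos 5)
Your proposal is correct in substance but takes a genuinely different route from the paper. The paper's proof is two lines: it observes that density-one convergence of the columns gives both $\{A_n\}\in\mathcal{VMV}$ and $\delta_{\pmb{a}}$-distribution (examples (ii) of \S 2.1--2.2) and then simply cites Theorem~\ref{main normal} with $\mu=\delta_{\pmb{a}}$, so that $F(\pmb{a},t)=a(t)$ collapses the inner integral. Your first half reproduces exactly this, with the good-set/bad-set estimates written out, and that alone would already match the paper's intent. Where you diverge is in refusing to apply Theorem~\ref{main normal} directly because the $o(n)$ exceptional columns need not lie in a compact $X\subset l^1$; instead you compare $A_n$ with the Toeplitz truncation $T_n(a)$ via the diagonal-wise bound $\|A_n-T_n(a)\|_{tr}\le\sum_j\|\pmb{a}_j(n)-\pmb{a}\|_{l^1}=o(n)$ and conclude with Lemma~\ref{von neumann}. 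This is a legitimate and arguably cleaner workaround for a genuine soft spot in the paper's formulation (the paper's own two-line proof glosses over the same compactness issue), and it isolates the analytic content in a single trace-norm estimate. Two caveats: (i) the paper's ``First Szeg\H{o}'' corollary is stated only for real-valued $a\in L^1(\D)$ and $\varphi\in C_c(\R)$, so for possibly complex $\pmb{a}$ and analytic $\varphi$ you should instead apply Theorem~\ref{main normal} (or Theorem~\ref{main trace} plus truncation) to the sequence $\{T_n(a)\}$ itself, which is $\delta_{\pmb{a}}$-distributed on the one-point compact set and satisfies \eqref{cond szego} with $\M=\|\pmb{a}\|_{l^1}$; (ii) both your use of Lemma~\ref{von neumann} (which needs $\|A_n\|_\infty=\Oo(1)$) and your bound on the bad columns require a uniform bound such as \eqref{cond szego} on $\sup_j\|\pmb{a}_j(n)\|_{l^1}$, which is not literally in the corollary's hypotheses; you are implicitly importing the paper's standing assumption, as the paper itself does, so you should state it explicitly.
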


\begin{proof}
This is a simple application of Theorem \ref{main normal}. Indeed, $\{A_n\} \in \mathcal{VMV}$ since the diagonals are given by density one convergent sequences and $\{A_n\}$ is $\delta_{\pmb{a}}$-distributed.
\end{proof}


\subsection{Illustrative examples}
\label{illustrativeexample}

Here we elaborate two examples of how our results apply to matrix sequences that are not of the \KMS type or other generalizations that have been previously considered. We only consider sequences of finite finite band size as our results can easily be extended to arbitrary sequences of matrices if one imposes conditions \eqref{cond szego} or \eqref{n-cond}.

\smallskip

Let $\{r_n\}$ be a sequence of positive numbers such that $r_n=o(n)$ and $r_n \to \infty$.  Also, let $\{c_n\}$ be a sequence that is $\nu$-distributed for some probability measure $\nu$ on $[0,1]$, i.e.
$$ \frac{1}{n} \sum_{k=1}^n \delta_{c_k} \to \nu.$$
We break up the diagonals into $\lfloor n/r_n\rfloor$ bins, where in each bin the entries tend toward constants.
To be precise, let $\{a(n)\}$ be a sequence for which $a_{k \lfloor r_n \rfloor +j} \to c_{k}$ as $j \to \infty$ with $k=0,...,\lfloor n/r_n \rfloor$ and $j=0,...,\lfloor r_n \rfloor$. For instance, construct the sequence $\{a(n)\}$ in the following way.  In the first $\lfloor r_n \rfloor$ entries of $a(n)$ place the constant $c_1$; in the next $\lfloor r_n \rfloor$ entries place $c_2$, and so on up to $a_{\lfloor r_n \rfloor}$, so that
\[a(n) = \{ \underbrace{c_1, c_1, \dots, c_1}_{\lfloor r_n \rfloor \mbox{ times}}, \, \underbrace{c_2, c_2, \dots, c_2}_{ \lfloor r_n \rfloor  \mbox{ times}} , \dots, \underbrace{c_{\lfloor r_n \rfloor}, c_{ \lfloor r_n \rfloor }, \dots, c_{\lfloor r_n \rfloor}}_{n-\lfloor n/r_n \rfloor  \, \lfloor r_n \rfloor \mbox{ times}} \}
\]
Now,  consider Riemann integrable functions $\hat{\alpha}_{-k_0},...,\hat{\alpha}_{k_0}$ on $[0,1]$ and construct the sequence of matrices $\{A_n\}$ by putting $\hat{\alpha}_k(a(n))$ on its $k$th diagonal. Then $\{A_n\}\in{\mathcal VMV}$ and is $\mu-$distributed with $\mu$ the push-forward measure of $\nu$ under the map $\alpha(s)=(\hat{\alpha}_{-k_0}(s),...,\hat{\alpha}_{k_0}(s))$ for $s \in [0,1]$.  Hence, we deduce
\begin{equation} \label{ex 1}
\lim_{n\rightarrow\infty}\frac{1}{n}  \sum_{k=1}^n \varphi(\lambda_k(A_n)) = \frac{1}{2\pi}\int_{-\pi}^{\pi} \int_0^1 \varphi(a(s,t) ) \, d\nu(s) \, dt
\end{equation}
for every $\varphi \in C^\omega(D_\M)$.

As a particular example, in the discrete Schr\"odinger case (i.e., $\hat{a}_1(s) = \hat{a}_{-1}(s)=1$, $\hat{a}_k(s)=0$ for $|k|>1$, and $f(s):=\hat{a}_0(s)$), \eqref{ex 1} reduces to
$$ \lim_{n\rightarrow\infty}\frac{1}{n}  \sum_{k=1}^n \varphi(\lambda_k(A_n))
= \frac{1}{2\pi}\int_{-\pi}^{\pi} \int_0^1 \varphi\left(f(x)+2\cos t \right) \, d\nu(x) \, dt
$$
One can make a simple change of variables in order to compute the asymptotic spectral density $\rho$ under some monotonicity assumptions. For instance, if we assume that $f$ is increasing on $[0,1]$ and $f(1)-f(0)<4$, then we can write \eqref{ex 1} as
$$   \lim_{n\rightarrow\infty}\frac{1}{n}  \tr[\varphi(A_n)]  =  \int_{f(0)-2}^{f(1)+2}  \varphi(x) \, \rho(x) \, dx$$
for any $\varphi \in C_c(\R)$ where
\[
\rho(x) = \left\{ \begin{array}{ll}
\dsp \int_{0}^{f^{-1}(x+2)} \frac{ d\nu(s)}{\sqrt{4-(x-f(s))^2}}  & x \in (f(0)-2,f(1)-2)\\
 \\
\dsp \int_{0}^1 \frac{d\nu(s)}{\sqrt{4-(x-f(s))^2}} & x \in (f(1)-2,f(0)+2) \\
\\
\dsp \int_{f^{-1}(x-2)}^1 \frac{d\nu(s)}{\sqrt{4-(x-f(s))^2}}  & x \in (f(0)+2,f(1)+2)
\end{array}\right.
\]


\smallskip

In our second example, we model each of the $\lfloor r_n\rfloor$ bins on the diagonals by Riemann integrable functions.
 Consider sequences of partitions $\{P_n\}$ with $P_n=\{t_{0;n},...,t_{r_n;n}\}$ of $[0,1]$ with $\text{mesh}(P_n)=o(1)$ and Riemann integrable functions $\hat{a}_{k;j}$ for $|k| \leq k_0$ and $j \in \N$. We construct the sequence of matrices $\{A_n\}$ for which the $k$th diagonal of $A_n$ is given by
\[ \underbrace{ \hat{a}_{k;1} (t_{0;n}), \ldots, \hat{a}_{k;1}(t_{r_n;n})}_{\lfloor r_n \rfloor \mbox{ times}}, \,  \underbrace{ \hat{a}_{k;2} (t_{0;n}), \ldots, \hat{a}_{k;2}(t_{r_n;n})}_{\lfloor r_n \rfloor \mbox{ times}}, \text{ etc}.
\]
The sequence $\{A_n\}$ is obviously in $\mathcal{VMV}$. Let $\pmb{\alpha}_j$ be the maps on $[0,1]$ defined by
\begin{equation*}
\pmb{ \alpha}_j (s) = (\hat{a}_{-k_0;j}(s),...,\hat{a}_{k_0;j}(s))  \qquad (j \in \N).
\end{equation*}
Under the assumption that the push-forward measures $n^{-1} \sum_1^n m_{\pmb{\alpha}_j}$ converge weakly to a measure $\mu$ on some compact set $X \subset \C^{2k_0+1}$, the sequence $\{A_n\}$ is $\mu$-distributed and hence Theorem \ref{main trace} implies
\begin{equation} \label{ex 2}
\lim_{n\rightarrow\infty}\frac{1}{n}  \sum_{k=1}^n \varphi(\lambda_k(A_n))= \frac{1}{2\pi}\int_{-\pi}^{\pi} \int_X \varphi(P(\pmb{z},t) ) \, d\mu(\pmb{z}) \, dt
\end{equation}
for any $\varphi \in C^\omega(D_\M)$.

 \section{Discussion}

In this paper we have restricted our attention to the scalar case.  Obvious extensions of  results on sequences of multi-level Toeplitz matrices or block Toeplitz matrices as considered in \cite{ti98b, SC03} will be explored in future works.  Here we mention a few other directions for future research.

It would be interesting to investigate the possible connections between our results and the discretization of differential operators. For instance, Jacobi matrices have been successfully used to study the discrete Schr\"odinger equation (see \cite{CySi87}).  This approach has also been used by the first author to compute the LSD of the quantum asymmetric top \cite{agbo09}.  Tilli~\cite{ti98b} used a similar approach in the study of Sturm-Liouville operators.  We note that our results should allow one to derive results about the spectra of differential operators whose coefficients are discontinuous.

Another possible direction to extend our work would be random Toeplitz matrices. Recently, Bryc et al. \cite{BrWo06} showed that  the LSD of random Toeplitz matrices exists, but were unable to provide a closed form for it. Kargin \cite{Ka09} slightly improved their results by looking at different asymptotic regimes, but he was still unable to explicitly compute the LSD for every regime. We believe that our methods can be used to compute the LSD of those matrices.

All the results presented in this paper are concerned with the First Szeg\H{o}'s Limit Theorem. Evidently, it would be of great interest to extend our results to the Strong Szeg\H{o}'s Limit Theorem  \cite{sz52},
which gives an explicit expression for the error term in the first theorem.  In particular, the strong theorem allows one to calculate the asymptotics of the determinant (as opposed to just the $n$th roots of the determinant).

Mejlbo and Schmidt \cite{mesc62} derived a Strong Szeg\H{o}'s Theorem for generalized Toeplitz matrices of the Kac-Murdock-Szeg\H{o} type under fairly restrictive conditions.
Later, Shao and Erhardt \cite{sh98, ehsh01} found a (different) expression for the error under more general conditions.
Both Mejlbo and Schmidt, and Shao and Erhardt require the functions $\hat{a}_k$ modeling the diagonals to be H\"older continuous with exponent $\alpha \geq 3/2$.  Our results for the trace only require Riemann integrability, so it may be possible to extend their results to a broader class of operators.


\end{document}